\newcommand{\bburl}[1]{\textcolor{blue}{\url{#1}}}
\newcommand{\monthyear}[1]{%
  \def\@monthyear{\uppercase{#1}}}
\newcommand{\volnumber}[1]{%
  \def\@volnumber{\uppercase{#1}}}
\def\ps@plain{\ps@empty
  \def\@oddfoot{\@monthyear \hfil \thepage}%
  \def\@evenfoot{\thepage \hfil \@volnumber}}
\def\ps@firstpage{\ps@plain}
\def\ps@headings{\ps@empty
  \def\@evenhead{%
    \setTrue{runhead}%
    \def\thanks{\protect\thanks@warning}%
    %%%%%%%%%%%%%%%%%%%%%%%%%%%%%%%%%%%%%%%%%%%%%%%%%%%%%%%%%%%Restore this later%%%%%%%%%%%%%%%%%%%%%%%%%%%%%%%%%
    %\uppercase{The Fibonacci Quarterly}\hfil}%
    \uppercase{\ }\hfil}%
    %%%%%%%%%%%%%%%%%%%%%%%%%%%%%%%%%%%%%%%%%%%%%%%%%%%%%%%%%%%Restore this later%%%%%%%%%%%%%%%%%%%%%%%%%%%%%%%%%
  \def\@oddhead{%
    \setTrue{runhead}%
    \def\thanks{\protect\thanks@warning}%
    \hfill\uppercase{Various Sequences from Counting Subsets}}%
    %\hfill\uppercase{Gaussian Behavior in Zeckendorf Decompositions Arising From Two-Dimensional Lattices}}
  \let\@mkboth\markboth
  \def\@evenfoot{%
    \thepage \hfil \@volnumber}%
  \def\@oddfoot{%
    \@monthyear \hfil \thepage}%
  }%
\theoremstyle{plain}
\numberwithin{equation}{section}
\newtheorem{thm}{Theorem}[section]
\newcommand{\seqnum}[1]{\href{https://oeis.org/#1}{\underline{#1}}}
\newcommand{\ignore}[1]{}
\newcommand{\lf}{\lfloor}
\newcommand{\rf}{\rfloor}
\newcommand{\floor}[1]{\lf {#1} \rf}
\newcommand\be{\begin{eqnarray}}
\newcommand\ee{\end{eqnarray}}
\newcommand\bea{\begin{eqnarray}}
\newcommand\eea{\end{eqnarray}}
\newcommand\ben{\begin{enumerate}}
\newcommand\een{\end{enumerate}}
\newtheorem{cor}[thm]{Corollary}
\newtheorem{prop}[thm]{Proposition}
\newtheorem{defi}[thm]{Definition}
\newtheorem{rek}[thm]{Remark}
\begin{document}
%% replace the values in the next three lines by the correct information

\monthyear{}
\volnumber{Volume, Number}
\setcounter{page}{1}
\title{Various Sequences from Counting Subsets}

\author{H\`ung Vi\d{\^e}t Chu}

\address{Department of Mathematics, University of Illinois at Urbana-Champaign, Urbana, IL 61820} \email{hungchu2@illinois.edu}

\date{\today}

\begin{abstract}
As $n$ varies, we count the number of subsets of $\{1,2,\cdots,n\}$ under different conditions and study the sequence formed by these numbers. 
\end{abstract}

\thanks{The author is thankful for the anonymous referee's comments that improved the exposition of this paper.}

\maketitle
%%%%%%%%%%%%%%%%%%%%%%%%%%%%%%%%%%%%%%%%%%%%%%%%%%%%%%%%%%%%%%%%%%%%%%%%%%%%%%%%%%%%%%%%%%%%%%%%%%%%%%%%%%%%%%%%%%%%%%%%%%%%%%%%%%%%%%%%%%%%%%%%%%%%%%%%%%%%%%%%%%%%%
%%%%%%%%%%%%%%%%%%%%%%%%%%%%%%%%%%%%%%%%%%%%%%%%%%%%%%%%%%%%%%%%%%%%%%%%%%%%%%%%%%%%%%%%%%%%%%%%%%%%%%%%%%%%%%%%%%%%%%%%%%%%%%%%%%%%%%%%%%%%%%%%%%%%%%%%%%%%%%%%%%%%%
%%%%%%%%%%%%%%%%%%%%%%%%%%%%%%%%%%%%%%%%%%%%%%%%%%%%%%%%%%%%%%%%%%%%%%%%%%%%%%%%%%%%%%%%%%%%%%%%%%%%%%%%%%%%%%%%%%%%%%%%%%%%%%%%%%%%%%%%%%%%%%%%%%%%%%%%%%%%%%%%%%%%%
\section{Introduction}

We define the $\alpha$-Schreier condition. 
Given natural number $\alpha$, a set $S$ is said to be \textit{$\alpha$-Schreier} if $\min S/\alpha \ge |S|$, where $|S|$ is the cardinality of $S$. Schreier used 1-Schreier sets to solve a problem in Banach space theory \cite{S}. These sets were also independently discovered in combinatorics and are connected to Ramsey-type theorems for subsets of $\mathbb{N}$. Next, we define the $\beta$-Zeckendorf condition. In 1972, Zeckendorf proved that every positive integer can be uniquely written as a sum of non-consecutive Fibonacci numbers \cite{Z}. We focus on the important requirement for uniqueness of the Zeckendorf decomposition; that is, our set contains no two consecutive Fibonacci numbers. We generalize this condition to a finite set of natural numbers. 

\begin{defi}
Let $S = \{s_1,s_2,\ldots, s_k\}$ $(s_1<s_2<\cdots<s_k)$ for some $k\in \mathbb{N}_{\ge 2}$. The \textit{difference set} of $S$, denoted by $D(S)$, is $\{s_2-s_1,s_3-s_2,\ldots,s_k-s_{k-1}\}$. The difference set of the empty set and a set with exactly one element is empty. 
\end{defi}

\begin{defi}
Fix a natural number $\beta$. A finite set $S$ of natural numbers is \textit{$\beta$-Zeckendorf} if $\min D(S)\ge \beta$; that is, each pair of numbers in $S$ is at least $\beta$ apart. The empty set and a set with exactly one element vacuously satisfy this condition. 
\end{defi}

Chu et al. proved the linear recurrence of the sequence obtained by counting subsets of $\{1,2,\ldots, n\}$ that are $\alpha$-Schreier \cite{Ch2}. In particular, \cite[Theorem 1.1]{Ch2} states that the recurrence has order $\alpha+1$. On the other hand, it is well known that the sequence obtained by counting subsets of $\{1,2,\ldots, n\}$ that are $\beta$-Zeckendorf has a linear recurrence of order $\beta$. A notable example is $\beta = 2$, which gives the Fibonacci sequence. A natural extension of these results is to consider sets that are both $\alpha$-Schreier and $\beta$-Zeckendorf. For each $n\in\mathbb{N}$, define $$a_{\alpha,\beta, n}:\ =\ \#\{S\subset \{1, 2, \ldots, n\}\,:\, S \text{ is }\alpha\text{-Schreier} \text{ and }\beta\text{-Zeckendorf}\}.$$ 
Our first result shows a linear recurrence for this sequence $(a_{\alpha, \beta, n})$.
\begin{thm}\label{m1}
Fix natural numbers $\alpha$ and $\beta$. For $n\ge 1$, we have
\begin{align*}
a_{\alpha, \beta, n} \ =\ \begin{cases} 1, & \text{ for } n \le \alpha - 1;\\
n-\alpha+2, & \text{ for } \alpha \le n \le 2\alpha+\beta-1;\\
a_{\alpha, \beta, n-1} + a_{\alpha, \beta, n - (\alpha+\beta)}, & \text{ for }  n\ge 2\alpha + \beta.
\end{cases}\end{align*}
\end{thm}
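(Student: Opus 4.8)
The plan is to establish the first two lines by direct enumeration and the recurrence by a bijection that splits the count according to whether a subset contains $n$.

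For the base cases, I would start from the observation that the empty set is always both $\alpha$-Schreier and $\beta$-Zeckendorf, contributing $1$ to every count. When $n \le \alpha - 1$, any nonempty $S \subseteq \{1,\ldots,n\}$ satisfies $\min S \le n < \alpha \le \alpha|S|$ and so fails the Schreier condition; hence $a_{\alpha,\beta,n} = 1$. When $\alpha \le n \le 2\alpha + \beta - 1$, the nonempty Schreier-Zeckendorf sets are exactly the singletons $\{s\}$ with $\alpha \le s \le n$ (a singleton is Schreier iff $s \ge \alpha$ and is vacuously Zeckendorf), of which there are $n - \alpha + 1$. No set of size $\ge 2$ can occur, because it would need $\min S \ge 2\alpha$ together with a second element at least $\beta$ larger, forcing a maximum element $\ge 2\alpha + \beta > n$. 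This gives $a_{\alpha,\beta,n} = 1 + (n-\alpha+1) = n - \alpha + 2$.

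For the recurrence with $n \ge 2\alpha + \beta$, I would partition the Schreier-Zeckendorf subsets of $\{1,\ldots,n\}$ into those avoiding $n$ and those containing $n$. The former are precisely the Schreier-Zeckendorf subsets of $\{1,\ldots,n-1\}$, contributing $a_{\alpha,\beta,n-1}$. For the latter, write $S = \{s_1 < s_2 < \cdots < s_k = n\}$ and send it to $T := \{s_1 - \alpha, s_2 - \alpha, \ldots, s_{k-1} - \alpha\}$, with $T = \emptyset$ when $k = 1$; that is, delete the top element $n$ and shift the remaining elements down by $\alpha$. The inverse map sends $T = \{t_1 < \cdots < t_{k-1}\}$ to $\{t_1 + \alpha, \ldots, t_{k-1} + \alpha, n\}$.

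The crux is to verify that this correspondence is a bijection onto the Schreier-Zeckendorf subsets of $\{1,\ldots,n-\alpha-\beta\}$, and this is exactly where the offset $\alpha + \beta$ is forced. The Zeckendorf half is the easier one: internal gaps are unchanged by a uniform shift, while the required gap $n - s_{k-1} \ge \beta$ becomes $s_{k-1} - \alpha \le n - \alpha - \beta$, which is precisely the statement that $T$ lands in $\{1,\ldots,n-\alpha-\beta\}$. The Schreier half is the delicate part, since deleting an element lowers the cardinality by one: $S$ being Schreier means $s_1 \ge \alpha k$, and after the deletion-and-shift one gets $\min T = s_1 - \alpha \ge \alpha k - \alpha = \alpha(k-1) = \alpha|T|$, so the shift by exactly $\alpha$ is what compensates for the drop of one in cardinality, keeping $T$ Schreier. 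Running the identical computation in reverse shows the inverse map returns Schreier-Zeckendorf sets containing $n$, so these subsets are counted by $a_{\alpha,\beta,n-\alpha-\beta}$, and adding the two parts gives the recurrence. I expect the main obstacle to be bookkeeping the Schreier inequality correctly under the simultaneous deletion and shift, along with confirming the degenerate cases, namely $k=1$ (where $S = \{n\}$ corresponds to $T = \emptyset$, valid since $n \ge \alpha$) and the positivity $s_1 - \alpha \ge \alpha(k-1) \ge 1$ that guarantees $T$ stays inside $\{1,\ldots,n-\alpha-\beta\}$.
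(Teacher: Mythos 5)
Your proposal is correct and follows essentially the same route as the paper: the same direct enumeration for the two initial ranges, the same split according to whether $n \in S$, and the same delete-$n$-and-shift-by-$\alpha$ bijection (the paper's maps $f(S) = S\backslash\{n\} - \alpha$ and $g(T) = (T+\alpha)\cup\{n\}$ are exactly your correspondence and its inverse). The only cosmetic difference is that the paper verifies the bijection via two injections rather than checking the maps are mutually inverse.
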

\begin{rek}
Theorem \ref{m1} says that the order of our recurrence relation is the sum $\alpha+\beta$. Substituting $\beta = 1$, we have \cite[Theorem 1.1]{Ch2}. Interestingly, the number of 1's in the sequence is independent of $\beta$. 
\end{rek}

The next results involve the Fibonacci sequence. Let the Fibonacci sequence be $F_0 = 0$, $F_1 = 1$, and $F_n = F_{n-1}+F_{n-2}$ for all $n\ge 2$. Let $(H_n)_{n\ge 0}$ be the sequence obtained by applying the partial sum operator twice to the Fibonacci sequence. In particular, 
$$H_n \ =\ \sum_{i=0}^n (n+1-i)F_i.$$
The first few terms of $(H_n)$ are $0, 1, 3, 7, 14, 26$, and $46$. We prove the following identity.

\begin{prop}\label{propi}
For $n\ge 0$, we have \begin{align}\label{Fibi}F_{n+4}\ =\ H_{n} + n + 3.\end{align}
\end{prop}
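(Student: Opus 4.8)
The plan is to exploit the fact that $H_n$ is the second partial sum of the Fibonacci sequence and to reduce everything to the classical identity $\sum_{i=0}^{m} F_i = F_{m+2}-1$ (itself a one-line induction from $F_i = F_{i+2}-F_{i+1}$, which telescopes). First I would rewrite the weighted sum as an iterated sum. Since the weight $n+1-i$ is exactly the number of integers $m$ with $i \le m \le n$, we have $H_n = \sum_{i=0}^{n}(n+1-i)F_i = \sum_{m=0}^{n}\sum_{i=0}^{m}F_i$, exhibiting $H_n$ as the partial sum of the partial sums of $(F_i)$.

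Applying the partial sum identity to the inner sum gives $H_n = \sum_{m=0}^{n}\bigl(F_{m+2}-1\bigr) = \bigl(\sum_{m=0}^{n}F_{m+2}\bigr) - (n+1)$. The remaining sum is again a block of consecutive Fibonacci numbers, so a second application yields $\sum_{m=0}^{n}F_{m+2} = \sum_{k=2}^{n+2}F_k = (F_{n+4}-1) - F_0 - F_1 = F_{n+4}-2$. Combining the two displays, $H_n = (F_{n+4}-2) - (n+1) = F_{n+4} - n - 3$, which rearranges to precisely \eqref{Fibi}. As a cross-check — and as a self-contained alternative — one can instead induct on $n$: the base cases $n=0,1$ are immediate, the first-difference relation $H_n - H_{n-1} = \sum_{i=0}^{n}F_i = F_{n+2}-1$ follows by comparing the coefficient of each $F_i$ in consecutive $H$'s, and then under the hypothesis $F_{n+3} = H_{n-1}+n+2$ one computes $H_n + n + 3 = (H_{n-1}+F_{n+2}-1)+n+3 = F_{n+3}+F_{n+2} = F_{n+4}$.

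I do not expect a genuine obstacle here; the content is entirely a bookkeeping exercise in applying one standard summation identity twice. The single place where care is needed is the linear-plus-constant correction $n+3$, which is assembled from the two stray $-1$'s: one contributed $n+1$ times when the inner identity is applied inside the outer sum, and one more (together with the removal of $F_0+F_1$) when the outer Fibonacci block is summed and re-indexed as $\sum_{k=2}^{n+2}$. The index shift on that last sum is the only spot where an off-by-one error could creep in, so I would confirm the final formula against the tabulated values $H_0,\ldots,H_6 = 0,1,3,7,14,26,46$ and $F_4,\ldots,F_{10}=3,5,8,13,21,34,55$, all of which satisfy $F_{n+4}=H_n+n+3$.
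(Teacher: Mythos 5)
Your proposal is correct, and your primary argument takes a genuinely different route from the paper. The paper proves the identity by induction on $n$: it uses the first-difference relation $H_{k+1}-H_k=\sum_{i=0}^{k+1}F_i$ to rewrite $F_{k+5}=F_{k+4}+F_{k+3}$ in terms of $H_{k+1}$, and then kills the leftover term with the standard identity $\sum_{i=0}^{k+1}F_i=F_{k+3}-1$. Your main proof instead avoids induction on the statement altogether: you unwind $H_n=\sum_{i=0}^{n}(n+1-i)F_i$ as the double sum $\sum_{m=0}^{n}\sum_{i=0}^{m}F_i$ and apply the partial-sum identity $\sum_{i=0}^{m}F_i=F_{m+2}-1$ twice, which produces the closed form $H_n=F_{n+4}-n-3$ directly and makes the origin of the correction term $n+3$ completely transparent (one $-1$ appearing $n+1$ times, plus $F_{n+4}-2$ from the re-indexed outer block). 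Both arguments ultimately rest on the same well-known summation identity, so neither is deeper than the other; the paper's induction is marginally shorter to write down, while your computation explains the shape of the formula rather than merely verifying it, and it generalizes mechanically (indeed, the paper's own Section 3 generalization, $F_{n,m+2n}=H_{n,m}+m+(n+1)$, would follow from the same summation-swap applied to the identity in its item (1)). Your ``cross-check'' induction, by the way, is essentially the paper's proof verbatim, so you have in effect supplied both arguments.
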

We then use the identity to prove the following theorem. 
\begin{thm}\label{m2}
Let $(a_n)_{n\ge 1}$ be the number of subsets of $\{1, 2,\ldots, n\}$ that 
\begin{itemize}
    \item [(i)] have at least 2 elements; and 
    \item [(ii)] have their difference sets only contain odd numbers,
\end{itemize}
then $a_n = H_{n-1}$. 
\end{thm}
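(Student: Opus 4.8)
The condition in (ii) is really a parity statement: two integers differ by an odd number precisely when they have opposite parity, so a set has difference set consisting only of odd numbers if and only if its elements, listed in increasing order, alternate between odd and even. Thus $a_n$ counts the subsets of $\{1,\ldots,n\}$ of size at least $2$ whose elements alternate in parity. It is convenient to first count \emph{all} nonempty such subsets: write $c_n$ for the number of nonempty subsets of $\{1,\ldots,n\}$ whose elements alternate in parity (singletons being included vacuously). Since there are exactly $n$ singletons, $a_n = c_n - n$, and it suffices to determine $c_n$.

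To find $c_n$ I would track the parity of the largest element. Let $O_n$ (resp. $E_n$) be the number of nonempty alternating subsets of $\{1,\ldots,n\}$ whose maximum is odd (resp. even), so $c_n = O_n + E_n$. In passing from $\{1,\ldots,n-1\}$ to $\{1,\ldots,n\}$ the only new subsets are those with maximum equal to $n$, and such a subset is either $\{n\}$ itself or is obtained by adjoining $n$ to an alternating subset of $\{1,\ldots,n-1\}$ whose maximum has the opposite parity to $n$. When $n$ is even this gives $E_n = E_{n-1} + 1 + O_{n-1}$ and $O_n = O_{n-1}$, and symmetrically when $n$ is odd. Substituting the same identity one step earlier shows that the relevant ``opposite-parity maximum'' count at stage $n-1$ equals $1 + c_{n-2}$, and both cases then collapse to the single second-order recurrence
\begin{equation*}
c_n \ =\ c_{n-1} + c_{n-2} + 2 \qquad (n \ge 2),
\end{equation*}
with initial values $c_0 = 0$ and $c_1 = 1$.

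Setting $v_n = c_n + 2$ converts this into the Fibonacci recurrence $v_n = v_{n-1} + v_{n-2}$ with $v_0 = 2 = F_3$ and $v_1 = 3 = F_4$, whence $v_n = F_{n+3}$ and therefore $c_n = F_{n+3} - 2$. Consequently
\begin{equation*}
a_n \ =\ c_n - n \ =\ F_{n+3} - n - 2 .
\end{equation*}
At this point I would invoke Proposition \ref{propi} with its index taken to be $n-1$, which reads $F_{n+3} = H_{n-1} + (n-1) + 3 = H_{n-1} + n + 2$; substituting gives $a_n = F_{n+3} - n - 2 = H_{n-1}$, as claimed, and the index ranges match since Proposition \ref{propi} is valid for $n-1 \ge 0$, i.e.\ $n \ge 1$.

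The one genuinely delicate step is the derivation of the recurrence $c_n = c_{n-1} + c_{n-2} + 2$. The parity bookkeeping must be handled carefully: one must confirm that adjoining $n$ forces the previous maximum to have the opposite parity, and that the opposite-parity maximum count at stage $n-1$ is exactly $1 + c_{n-2}$ (so that the additive constant is precisely $2$), and one must check the small base cases $c_0$ and $c_1$ by hand so that the Fibonacci indexing $v_n = F_{n+3}$ lines up. Once this recurrence is established, the remaining algebra and the appeal to Proposition \ref{propi} are routine.
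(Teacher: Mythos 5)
Your proof is correct, and it is partly the same as the paper's and partly a genuine alternative. The endgame is identical: both you and the paper reduce to $a_n = F_{n+3}-n-2$ and then apply Proposition \ref{propi} with its index shifted to $n-1$, so that $F_{n+3}=H_{n-1}+n+2$ and hence $a_n = H_{n-1}$. The difference lies in how that intermediate count is obtained. The paper gets it in one line by citing the second item of Theorem \ref{m'} --- a result quoted from \cite{Ch1} --- which states that the number of subsets of $\{1,2,\ldots,n\}$ whose difference set contains only odd numbers (empty set and singletons included) is $F_{n+3}-1$; subtracting the empty set and the $n$ singletons gives $F_{n+3}-2-n$. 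You instead re-prove this count from scratch: the observation that odd differences force parity-alternating elements, the bookkeeping of maxima by parity via $O_n$ and $E_n$, the recurrence $c_n = c_{n-1}+c_{n-2}+2$, and its solution $c_n = F_{n+3}-2$ together constitute a self-contained proof of (the nonempty-subset case of) the second item of Theorem \ref{m'}. Your derivation checks out: the opposite-parity-maximum count at stage $n-1$ is indeed $1+c_{n-2}$, the base values $c_0=0$, $c_1=1$ are right, and the small cases $c_2=3$, $c_3=6$ confirm the recurrence, as does the index range for invoking Proposition \ref{propi}. So your route makes Theorem \ref{m2} independent of the external reference at the cost of extra work, while the paper's route is shorter precisely because Theorem \ref{m'} was imported and stated for this purpose.
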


Our last result is a companion of \cite[Theorem 8]{Ch1}, which considers subsets of $\{1,2,\ldots, n\}$ whose difference set only contains odd numbers. Surprisingly, the number of such subsets is related to the Fibonacci sequence. For convenience, we include the theorem below. 
\begin{thm}\label{m'}
Fix $n\in \mathbb{N}$. The number of subsets of $\{1,2,\ldots,n\}$ 
\begin{enumerate}
\item that contain $n$ and whose difference sets only contain odd numbers is $F_{n+1}$,
\item whose difference sets only contain odd numbers (the empty set and sets with exactly one element vacuously satisfy this requirement) is $F_{n+3}-1$.
\end{enumerate}
\end{thm}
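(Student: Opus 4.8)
The plan is to prove part (1) first, via a Fibonacci recurrence obtained by conditioning on the second-largest element, and then derive part (2) by summing the part-(1) counts over the possible maxima.

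For part (1), let $g(n)$ be the number of subsets of $\{1,\dots,n\}$ that contain $n$ and whose difference set consists only of odd numbers. Given such a set $S$ with $|S|\ge 2$, let $m$ be its second-largest element, i.e.\ the maximum of $S\setminus\{n\}$. The gap $n-m$ must be odd, so $m$ ranges over the integers in $\{1,\dots,n-1\}$ whose parity is opposite to that of $n$; and $S\mapsto S\setminus\{n\}$ is a bijection onto the valid subsets of $\{1,\dots,m\}$ with maximum $m$, of which there are $g(m)$. Counting the singleton $\{n\}$ separately gives
\[
g(n)\ =\ 1+\sum_{\substack{1\le m\le n-1\\ n-m\ \mathrm{odd}}} g(m).
\]

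Writing the same formula for $g(n-2)$ --- whose admissible indices satisfy $(n-2)-m$ odd, hence the \emph{same} parity condition $n-m$ odd, but now ranging only up to $n-3$ --- and subtracting, all terms cancel except $m=n-1$, so that $g(n)-g(n-2)=g(n-1)$. This is exactly the Fibonacci recurrence $g(n)=g(n-1)+g(n-2)$ for $n\ge 3$. Since $g(1)=1=F_2$ and $g(2)=2=F_3$ are checked directly, induction yields $g(n)=F_{n+1}$, proving part (1).

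For part (2), I partition the subsets of $\{1,\dots,n\}$ with odd difference set according to their largest element. The empty set contributes $1$, and for each $m\in\{1,\dots,n\}$ the valid subsets with maximum $m$ are precisely those counted in part (1), namely $g(m)=F_{m+1}$ of them. Hence the total is $1+\sum_{m=1}^{n}F_{m+1}=1+\sum_{j=2}^{n+1}F_j$. Invoking the standard identity $\sum_{j=0}^{N}F_j=F_{N+2}-1$ with $N=n+1$ and removing $F_0+F_1=1$ gives $\sum_{j=2}^{n+1}F_j=F_{n+3}-2$, and therefore the count equals $F_{n+3}-1$, as claimed.

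The only delicate point is the subtraction step in part (1): one must verify that the conditioning sums for $g(n)$ and $g(n-2)$ run over the \emph{same} parity class of $m$, which is what makes them telescope down to a single surviving term $g(n-1)$. This holds because $n-m$ and $(n-2)-m$ differ by $2$ and so have equal parity. Everything else --- the base cases, the bijection $S\mapsto S\setminus\{n\}$, and the Fibonacci summation identity --- is routine.
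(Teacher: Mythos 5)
Your proof is correct, but note that this paper never proves Theorem~\ref{m'} at all: it is restated ``for convenience'' from \cite[Theorem 8]{Ch1}, so there is no internal argument to compare against, and your write-up in effect supplies the missing self-contained proof. Your part (1) is sound: conditioning on the second-largest element $m$ gives $g(n)=1+\sum g(m)$ over $m\le n-1$ with $n-m$ odd, and the key observation that $g(n-2)$ sums over the \emph{same} parity class (since $(n-2)-m$ and $n-m$ have equal parity) makes the subtraction collapse to the single term $g(n-1)$, yielding the Fibonacci recurrence $g(n)=g(n-1)+g(n-2)$ for $n\ge 3$; together with $g(1)=1=F_2$, $g(2)=2=F_3$ this gives $g(n)=F_{n+1}$. (A common alternative, and the style of argument used in \cite{Ch1}, is a direct bijection between the sets counted by $g(n)$ and the disjoint union of those counted by $g(n-1)$ and $g(n-2)$ --- e.g.\ remove $n$ when $n-1\in S$, and otherwise replace $n$ by $n-2$; your telescoping of the conditioning sums is an equally valid route to the same recurrence.) Your part (2) is also correct: partitioning by the maximum gives $1+\sum_{m=1}^{n}F_{m+1}$, and the identity $\sum_{j=0}^{N}F_j=F_{N+2}-1$ with $N=n+1$ turns this into $F_{n+3}-1$. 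This summation step is exactly how the present paper uses the theorem elsewhere (compare the proof of Theorem~\ref{m3}, item 2, which sums the ``contains $n$'' counts plus $1$ for the empty set), so your derivation of part (2) from part (1) fits the paper's conventions precisely.
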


To complete the picture, we consider subsets whose difference set only contains even numbers. 

\begin{thm}\label{m3}
Fix $n\in\mathbb{N}$. The number of subsets of $\{1, 2, \ldots, n\}$ 
\begin{itemize}
\item [1.] that contain $n$ and whose difference sets only contain even numbers is $2^{\floor{(n-1)/2}}$,

\item [2.] whose difference sets only contain even numbers (the empty set and sets with exactly one element vacuously satisfy this requirement) is 
$$\begin{cases}3\cdot 2^{(n-1)/2}-1, & \text{ if } n \text{ is odd;}\\
2\cdot 2^{n/2} - 1, & \text{ if } n \text{ is even}.\end{cases}$$
\end{itemize}
\end{thm}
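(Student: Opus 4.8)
The plan is to count subsets $S\subseteq\{1,2,\ldots,n\}$ whose difference set $D(S)$ contains only even numbers. The key structural observation is that if all consecutive differences are even, then all elements of $S$ share the same parity: either every element is odd, or every element is even. This is because $s_{i+1}-s_i$ even means $s_{i+1}\equiv s_i\pmod 2$, so by transitivity all elements are congruent mod $2$. Conversely, any subset lying entirely within the odds or entirely within the evens automatically has an even difference set. So the problem decouples completely into two independent counting problems on the odd elements $\{1,3,5,\ldots\}\cap\{1,\ldots,n\}$ and the even elements $\{2,4,\ldots\}\cap\{1,\ldots,n\}$.

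For part 1, I would count subsets containing $n$ with even difference set. Fixing that $n\in S$ forces every element of $S$ to have the same parity as $n$. The remaining elements may be any subset of the numbers in $\{1,\ldots,n-1\}$ having the same parity as $n$. If $n$ is odd, the available same-parity elements below $n$ are $1,3,\ldots,n-2$, of which there are $(n-1)/2$; if $n$ is even, they are $2,4,\ldots,n-2$, of which there are $(n-2)/2=(n-1)/2$ rounded down. In both cases the count of free elements is $\lfloor(n-1)/2\rfloor$, so the number of subsets is $2^{\lfloor(n-1)/2\rfloor}$, matching the claim. The main thing to verify carefully is that the floor expression unifies the two parity cases correctly, which is a short arithmetic check.

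For part 2, I would count all subsets with even difference set, including the empty set and singletons (which vacuously qualify). By the decoupling observation, such a subset is either empty, or entirely contained in the odd elements, or entirely contained in the even elements. Let $p=\lceil n/2\rceil$ be the number of odd elements in $\{1,\ldots,n\}$ and $q=\lfloor n/2\rfloor$ the number of even elements. Any subset of the $p$ odds works, giving $2^p$ subsets (including empty), and similarly $2^q$ subsets of the evens. Adding these double-counts the empty set once, so the total is $2^p+2^q-1$. Then I would split into the parity of $n$: when $n$ is odd, $p=(n+1)/2$ and $q=(n-1)/2$, so $2^p+2^q-1=2^{(n-1)/2}(2+1)-1=3\cdot 2^{(n-1)/2}-1$; when $n$ is even, $p=q=n/2$, giving $2\cdot 2^{n/2}-1$. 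These match both cases of the stated formula.

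The proof is essentially elementary once the parity decoupling is established, so I do not anticipate a serious obstacle. The only point requiring care is the inclusion–exclusion bookkeeping in part 2, namely remembering that the empty set is counted in both the odd-subset tally and the even-subset tally and must be subtracted once; and correctly translating $\lceil n/2\rceil$ and $\lfloor n/2\rfloor$ into the exact exponents for each parity of $n$. An alternative to part 2 would be to derive it directly from part 1 by summing $2^{\lfloor(m-1)/2\rfloor}$ over $m$ from $1$ to $n$ and adding $1$ for the empty set, using that every nonempty qualifying subset has a well-defined maximum element $m$; this telescoping sum gives the same closed form and provides a useful consistency check against part 1.
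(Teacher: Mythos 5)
Your proof is correct, but it takes a genuinely different route from the paper. The paper proves part 1 by induction on $n$: writing $O_k$ for the family of qualifying subsets with maximum $k$, it derives the recurrence $P_{k+1} = 1 + \sum_{1\le i<(k+1)/2} P_{k+1-2i}$ by conditioning on the second-largest element, and then evaluates the sum; part 2 is then obtained exactly as in your ``alternative'' remark, by summing $|O_k|$ over $k$ and adding $1$ for the empty set. Your argument instead rests on a structural characterization the paper never states: $D(S)$ consists of even numbers if and only if all elements of $S$ have the same parity, i.e., $S$ is contained in the odds or in the evens. This reduces both parts to counting arbitrary subsets of a fixed set, with no induction at all: part 1 becomes $2^{\#\{\text{same-parity elements below } n\}} = 2^{\floor{(n-1)/2}}$, and part 2 becomes $2^{\lceil n/2\rceil} + 2^{\floor{n/2}} - 1$ by inclusion--exclusion (the empty set being the only subset counted twice), which splits into the two stated cases. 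Your approach is shorter and more conceptual --- it explains \emph{why} powers of $2$ appear --- whereas the paper's recurrence method has the advantage of uniformity: it parallels the treatment of the odd-difference case (Theorem \ref{m'}), where no parity decoupling exists and a Fibonacci-type recurrence is genuinely needed. Both your main argument and your consistency check are sound; the only point worth stating explicitly in a final write-up is the two-line proof of the parity characterization itself (transitivity of congruence mod $2$ along consecutive differences).
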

The following corollary is immediate. 
\begin{cor}
Let 
\begin{align*}\mathcal{S}_n & := \{S\subset \{1,2,\ldots, n\}: D(S) \text{ only has odd numbers or only has even numbers}\};\\
\mathcal{S}_{n,1} & := \{S\subset \{1,2,\ldots, n\}: D(S) \text{ only has odd numbers}\};\\
\mathcal{S}_{n,2} & := \{S\subset \{1,2,\ldots, n\}: D(S) \text{ only has even numbers}\}.
\end{align*}
Then $\lim_{n\rightarrow \infty} \frac{|\mathcal{S}_{n,1}|}{|\mathcal{S}_n|} = 1$; that is, as $n\rightarrow\infty$, almost all sets in $S_n$ have their difference sets only contain odd numbers. 
\end{cor}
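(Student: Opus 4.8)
The plan is to reduce everything to the two closed-form counts already established and then compare growth rates. First I would observe that, by definition, $\mathcal{S}_n = \mathcal{S}_{n,1} \cup \mathcal{S}_{n,2}$, so the key is to understand the overlap $\mathcal{S}_{n,1} \cap \mathcal{S}_{n,2}$. A set $S$ lies in this intersection exactly when its difference set $D(S)$ consists simultaneously of only odd numbers and only even numbers. Since no positive integer is both odd and even, this forces $D(S) = \emptyset$, which by the definition of the difference set happens precisely when $|S| \le 1$. Hence $\mathcal{S}_{n,1} \cap \mathcal{S}_{n,2}$ is exactly the collection of the empty set together with the $n$ singletons, giving $|\mathcal{S}_{n,1} \cap \mathcal{S}_{n,2}| = n+1$.

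Next I would apply inclusion–exclusion to write
\[
|\mathcal{S}_n| \ =\ |\mathcal{S}_{n,1}| + |\mathcal{S}_{n,2}| - (n+1),
\]
and then substitute the explicit formulas supplied by the earlier theorems. By Theorem \ref{m'}(2) we have $|\mathcal{S}_{n,1}| = F_{n+3} - 1$, and by Theorem \ref{m3}(2) the count $|\mathcal{S}_{n,2}|$ equals $3\cdot 2^{(n-1)/2}-1$ or $2\cdot 2^{n/2}-1$ according to the parity of $n$. In either parity case $|\mathcal{S}_{n,2}| = \Theta\!\big((\sqrt{2})^{n}\big)$.

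I would then form the ratio and factor out the dominant term:
\[
\frac{|\mathcal{S}_{n,1}|}{|\mathcal{S}_n|}
\ =\ \frac{1}{\,1 + \dfrac{|\mathcal{S}_{n,2}| - (n+1)}{|\mathcal{S}_{n,1}|}\,}.
\]
The crux of the argument is the growth-rate comparison: since $F_{n+3}-1 = \Theta(\varphi^{\,n})$ with $\varphi = (1+\sqrt5)/2 \approx 1.618 > \sqrt2 \approx 1.414$, the Fibonacci count grows strictly faster than $|\mathcal{S}_{n,2}|$. Consequently both $\dfrac{|\mathcal{S}_{n,2}|}{|\mathcal{S}_{n,1}|} \to 0$ and $\dfrac{n+1}{|\mathcal{S}_{n,1}|} \to 0$ as $n \to \infty$, so the denominator above tends to $1$ and the limit equals $1$.

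The only genuinely substantive point is the inequality $\varphi > \sqrt2$ that makes the odd-difference sets the overwhelming majority; everything else is bookkeeping, with the most error-prone step being the correct identification of the intersection (one must remember that singletons and the empty set satisfy \emph{both} conditions vacuously, contributing the $n+1$ correction). Once that is pinned down, no delicate estimation is needed — the exponential gap between $\varphi^n$ and $(\sqrt2)^n$ does all the work.
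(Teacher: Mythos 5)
Your proposal is correct and takes essentially the same route as the paper: both arguments reduce the claim, via the explicit counts $|\mathcal{S}_{n,1}| = F_{n+3}-1$ from Theorem \ref{m'} and $|\mathcal{S}_{n,2}| = \Theta\left((\sqrt{2})^{n}\right)$ from Theorem \ref{m3}, to the growth-rate comparison $\sqrt{2} < \varphi$, which forces $|\mathcal{S}_{n,2}|/|\mathcal{S}_{n,1}| \to 0$. If anything, your write-up is more careful than the paper's two-line proof, which skips the inclusion--exclusion bookkeeping for the intersection (the $n+1$ sets with at most one element) and misstates its auxiliary limit as $1$ when the quantity $\lim_{n\to\infty}\frac{3\cdot 2^{(n-1)/2}-1}{F_{n+3}-1}$ actually equals $0$ --- which is what the argument needs.
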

\begin{proof}
Since $3\cdot 2^{(n-1)/2} > 2\cdot 2^{n/2}$ and by Theorem \ref{m'}, it suffices to prove that 
\begin{align*}
    \lim_{n\rightarrow \infty} \frac{3\cdot 2^{(n-1)/2}-1}{F_{n+3}-1}\ =\ 1,
\end{align*}
which we can prove easily by using the Binet's formula for $F_{n+3}$. 
\end{proof}

\begin{rek}
Intuitively, the above corollary says that sets in $\mathcal{S}_{n,1}$ dominate sets in $\mathcal{S}_{n,2}$. The reason is that for a set in $\mathcal{S}_1$, the difference between consecutive elements can be as small as $1$, which gives us much more freedom in constructing such a set than a set in $\mathcal{S}_2$. 
\end{rek}

Section 2 is devoted to proofs of our main results, while Section 3 generalizes Proposition \ref{propi} and raises several questions for future research. 
%%%%%%%%%%%%%%%%%%%%%%%%%%%%%%%%%%%%%%%%%%%%%%%%%%%%%%%%%%%%%%%%%%%%%%%%%%%%%%%%%%%%%%%%%%%%%%%%%%%%%%%%%%%%%%%%%%%%%%%%%%%%%%%%%%%%%%%%%%%%%%%%%%%%%%
%%%%%%%%%%%%%%%%%%%%%%%%%%%%%%%%%%%%%%%%%%%%%%%%%%%%%%%%%%%%%%%%%%%%%%%%%%%%%%%%%%%%%%%%%%%%%%%%%%%%%%%%%%%%%%%%%%%%%%%%%%%%%%%%%%%%%%%%%%%%%%%%%%%%%%
%%%%%%%%%%%%%%%%%%%%%%%%%%%%%%%%%%%%%%%%%%%%%%%%%%%%%%%%%%%%%%%%%%%%%%%%%%%%%%%%%%%%%%%%%%%%%%%%%%%%%%%%%%%%%%%%%%%%%%%%%%%%%%%%%%%%%%%%%%%%%%%%%%%%%%
\section{Proofs}
\begin{proof}[Proof of Theorem \ref{m1}]
For $n\le \alpha-1$, the only subset of $\{1,2,\ldots, n\}$ that is $\alpha$-Schreier is the empty set, which is also $\beta$-Zeckendorf. Hence, $a_{\alpha,\beta, n} = 1$. 

Consider $\alpha \le n \le 2\alpha+\beta-1$. Let $S\subset \{1,2,\ldots, n\}$ be $\alpha$-Schreier and $\beta$-Zeckendorf. Suppose that $|S| \ge 2$. Because $\min S/\alpha \ge |S|\ge 2$, we have $\min S\ge 2\alpha$. Because $S$ is $\beta$-Zeckendorf, the other elements in $S$ must be at least $2\alpha+\beta$, which contradicts $n\le 2\alpha+\beta-1$. Hence, either $S = \emptyset$ or $S = \{k\}$ for $\alpha\le k\le n$. Therefore, $a_{\alpha, \beta, n} = n-\alpha+2$. 

Finally, consider $n\ge 2\alpha + \beta$. Let 
\begin{align*}
    \mathcal{A}:\  &=\ \{S\subset \{1,2, \ldots, n\}\,:\, S \text{ is }\alpha\text{-Schreier} \text{ and } \beta\text{-Zeckendorf}\text{ and } \max S < n\};\\
    \mathcal{B}:\  &=\  \{S\subset \{1,2, \ldots, n\}\,:\, S \text{ is }\alpha\text{-Schreier} \text{ and } \beta\text{-Zeckendorf} \text{ and }\max S = n\}.
\end{align*}
 Clearly, $\mathcal{A} = \{S\subset \{1,2,\ldots, n-1\}\,:\,  S \text{ is }\alpha\text{-Schreier} \text{ and } \beta\text{-Zeckendorf}\}$. Hence, $|\mathcal{A}| = a_{\alpha, \beta, n-1}$. It suffices to prove $|\mathcal{B}| = a_{\alpha, \beta, n-(\alpha+\beta)}$. We show a bijection between $\mathcal{B}$ and $\mathcal{C}: = \{S\subset \{1,2,\ldots, n-(\alpha+\beta)\}\,:\,  S \text{ is }\alpha\text{-Schreier} \text{ and } \beta\text{-Zeckendorf}\}$.
 
 Given a set $S$ and $k\in \mathbb{N}$, we let $S-k: = \{s-k\,:\, s\in S\}$. Define the function $f: \mathcal{B}\rightarrow\mathcal{C}$ such that 
 \begin{align*}
 f(S) \ =\ \begin{cases} \emptyset, & \text{ if } S = \{n\};\\
 S\backslash\{n\} - \alpha, & \text{ if } |S| > 1.
 \end{cases}
 \end{align*}
We show that $f$ is well-defined. If $|S| > 1$, we have $$\min f(S) \ =\ \min (S\backslash\{n\} - \alpha) \ =\ \min S - \alpha \ \ge\ \alpha|S| - \alpha \ =\ \alpha|f(S)|.$$
Hence, $f(S)$ is $\alpha$-Schreier. Because $S$ is $\beta$-Zeckendorf, $f(S)$ is also $\beta$-Zeckendorf. Lastly, we have
$$\max f(S) \ =\ \max (S\backslash\{n\})-\alpha \ \le \ (n-\beta)-\alpha \ =\ n-(\beta+\alpha).$$
Therefore, $f(S)\in \mathcal{C}$. We know that $f$ is injective by definition and thus, $|\mathcal{B}|\le |\mathcal{C}|$. Next, define the function $g: \mathcal{C}\rightarrow \mathcal{B}$ such that $g(S) = (S+\alpha)\cup \{n\}$. Because $S$ is $\beta$-Zeckendorf and $\max S\le n-(\alpha+\beta)$, we know that $g(S)$ is also $\beta$-Zeckendorf. To see why $g(S)$ is $\alpha$-Schreier, we observe that
$$\min g(S) \ =\  \min S + \alpha \ \ge\ \alpha(|S|+1)\ = \ \alpha|g(S)|.$$
Hence, $g$ is well-defined and is injective by definition. Therefore, $|\mathcal{B}|\ge |\mathcal{C}|$. We conclude that $|\mathcal{B}| = |\mathcal{C}|$, which completes our proof. 
\end{proof}

\begin{proof}[Proof of Proposition \ref{propi}] We prove by induction. Clearly, the identity holds for $n = 0$. Suppose the identity holds for $n = k$ for some $k\ge 0$; that is, 
$F_{k+4} \ =\ H_k + k + 3$.
We show that $F_{k+5} \ =\ H_{k+1} + k+4$. We have
\begin{align*}
    F_{k+5}\ =\ F_{k+4} + F_{k+3} &\ =\ H_k + k + 3 + F_{k+3}\\
    &\ =\ \left(H_{k+1} - \sum_{i=0}^{k+1} F_i\right) + k + 3 + F_{k+3}\\
    &\ =\ \left(H_{k+1}+k+4\right) + \left(F_{k+3} - \sum_{i=0}^{k+1} F_i - 1\right).
\end{align*}
It is well known that $F_{k+3} - \sum_{i=0}^{k+1} F_i - 1 = 0$; therefore, $F_{k+5} = H_{k+1} + k + 4$. This completes our proof.  
\end{proof}

\begin{proof}[Proof of Theorem \ref{m2}] By Theorem \ref{m'} and \eqref{Fibi}, we have 
\begin{align*}a_n\  =\ F_{n+3} - 2 - n \ =\ (H_{n-1}+n+2)-2-n \ =\ H_{n-1}.
\end{align*}
This completes our proof. 
\end{proof}

\begin{proof}[Proof of Theorem \ref{m3}] We prove the first item by induction. Let $P_n$ (and $O_n$, resp) be the number of subsets of $\{1,2,\ldots, n\}$ (and the set of subsets of $\{1, 2,\ldots,n\}$, resp) that satisfy our requirement. 

\textit{Base cases:} For $n = 1$, $\{1\}$ is the only subset of $\{1\}$ that satisfies our requirement. Hence, $P_1 = 1 = 2^{\floor{(1-1)/2}}$. Similarly, $O_2 = \{\{2\}\}$ and $P_2 = 1 = 2^{\floor{(2-1)/2}}$.

\textit{Inductive hypothesis:} Suppose that there exists $k\ge 2$ such that for all $n\le k$, we have $P_n = 2^{\floor{(n-1)/2}}$. We show that $P_{k+1} = 2^{\floor{k/2}}$. Observe that unioning a set in $O_{k+1-2i}$ with $k+1$ produces a set in $O_{k+1}$ and any set in $O_{k+1}$ is of the form of a set in $O_{k+1-2i}$ plus the element $k+1$. Therefore, 
\begin{align*}
    P_{k+1} \ =\ |O_{k+1}| \ =\ 1 + \sum_{1\le i< (k+1)/2} |O_{k+1-2i}| \ =\  1 + \sum_{1\le i< (k+1)/2} P_{k+1-2i}.
\end{align*}
The number $1$ accounts for the set $\{k+1\}$.
If $k$ is odd, we have
\begin{align*}
    P_{k+1} &\ =\ 1 + P_{k-1} + P_{k-3} + \cdots + P_2 \\
    &\ =\ 1 + 2^{\floor{(k-2)/2}}+2^{\floor{(k-4)/2}} + \cdots + 2^{\floor{1/2}}\\
    &\ =\ 1 + 2^{(k-3)/2}+2^{(k-5)/2} + \cdots + 2^{0/2} \ = \ 2^{(k-1)/2}\ =\ 2^{\floor{k/2}}.\\
\end{align*}
Similarly, if $k$ is even, we also have $P_{k+1} = 2^{\floor{k/2}}$. This completes our proof of the first item. The second item easily follows from the first by noticing that the number of subsets of $\{1,2,\ldots, n\}$ whose difference sets only contain even numbers is equal to $1+ \sum_{k=1}^n |O_k| = 1+\sum_{k=1}^n 2^{\floor{(k-1)/2}}$, where the number $1$ accounts for the empty set. It is an easy exercise to show that this formula and the formula given in item 2 are the same. 
\end{proof}
%%%%%%%%%%%%%%%%%%%%%%%%%%%%%%%%%%%%%%%%%%%%%%%%%%%%%%%%%%%%%%%%%%%%%%%%%%%%%%%%%%%%%%%%%%%%%%%%%%%%%%%%%%%%%%%%%%%%%%%%%%%%%
%%%%%%%%%%%%%%%%%%%%%%%%%%%%%%%%%%%%%%%%%%%%%%%%%%%%%%%%%%%%%%%%%%%%%%%%%%%%%%%%%%%%%%%%%%%%%%%%%%%%%%%%%%%%%%%%%%%%%%%%%%%%%%%%%%%%%%%%%%%%%%%%%%%%%%%%%%%%%%%%%%%%%%%%%%%%%%%%%%%%%%%%%%%%%%%%%%%%%%%%%%%%%%%%%%%%%%%%%%%%%%%%%%%%%%%%%%%%%%%%%%%%%%%%%%%%%%%%%%%%%%%%%%%%%%%%%%%%%%%%%%%%%%%%%%%%%%%%%%%%%%%%%%%%%%%%%%%%%%%%%
\section{Generalizations and Questions}
In this section, we generalize Proposition \ref{propi} and raise two questions for future research. For each $n\ge 2$, define the sequence $(F_{n,m})_{m\ge 0}$ as follows: $F_{n,0} = 0, F_{n,1} = \cdots = F_{n,n} = 1$, and $F_{n,m} = F_{n,m-1} + F_{n,m-n}$ for $m\ge n+1$. Let $(K_{n,m})$ and $(H_{n,m})$ be the sequence obtained by applying the partial sum operation to $(F_{n,m})$ once and twice, respectively. For example, when $n = 3$, we have Table 1.

\begin{tabular}{c|ccccccccccccc}
$m$ & $0$ & $1$ & $2$ & $3$ & $4$ & $5$ & $6$ & $7$ & $8$ & $9$ & $10$ & $11$ & $12$\\
\hline
$F_{n,m}$ & 0 & 1 & 1 & 1 & 2 & 3        & $4$ &    $6$ &   $9$ & $13$ & $19$ & $28$ & $41$ \\
$K_{n,m}$ & 0 & 1 & 2 & 3 & $5$ & $8$    & $12$ &   $18$&   $27$ & $40$ & $59$ & $87$ & $128$\\
$H_{n,m}$ & 0 & 1 & 3 & 6 & $11$ & $19$   & $31$ &   $49$&   $76$ & $116$ & $175$ & $262$ & $390$\\
\end{tabular}
\begin{center}
Table 1. The sequences $(F_{3,m}), (K_{3,m})$, and $(H_{3,m})$ for $0\le m\le 12$.
\end{center}

The first row is the sequence \seqnum{A000930} in the {\it On-Line Encyclopedia of Integer Sequences} (OEIS) \cite{OEIS}; the second 
row is \seqnum{A077868} and the third row is \seqnum{A050228}. Many thanks to N. J. A. Sloane for pointing out a miscalculation in Table 1 of the earlier version.

The following proposition generalizes Proposition \ref{propi}. 
\begin{prop}
For $n\ge 2$ and $m\ge 0$, we have
\begin{itemize}
    \item[(1)] $\sum_{i=0}^{k+1} F_{n,i} = F_{n,k+1+n}-1$ \mbox{ for }$k\ge 0$,
    \item[(2)] $F_{n, m+2n} = H_{n,m} + m + (n+1)$.
\end{itemize}
\end{prop}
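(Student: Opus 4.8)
The plan is to prove both identities by induction on their respective indices, establishing part (1) first since the inductive step of part (2) consumes it. I would view part (2) as the direct generalization of Proposition \ref{propi} and set up its induction to mirror that proof almost verbatim, with part (1) playing the role of the elementary Fibonacci identity $F_{k+3}-\sum_{i=0}^{k+1}F_i - 1 = 0$ used there. Throughout I would rely on the two relations defining the partial-sum operators, namely $K_{n,m} = \sum_{i=0}^{m} F_{n,i}$ and the one-step form $H_{n,m} = H_{n,m-1} + K_{n,m}$ for $m \ge 1$.

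For part (1), I would induct on $k$. In the base case $k=0$, the left side is $F_{n,0}+F_{n,1}=1$, while one application of the recurrence gives $F_{n,n+1}=F_{n,n}+F_{n,1}=2$, so the right side is $2-1=1$. For the inductive step, I would add $F_{n,k+2}$ to both sides of the hypothesis and use the defining recurrence in the form $F_{n,(k+2)+n}=F_{n,(k+1)+n}+F_{n,k+2}$ (valid because $(k+2)+n\ge n+1$) to collapse $F_{n,(k+1)+n}+F_{n,k+2}$ into the single term $F_{n,(k+2)+n}$, yielding exactly the desired right side.

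For part (2), I would again induct on $m$. The base case $m=0$ amounts to $F_{n,2n}=n+1$, which I would verify by the observation that since $F_{n,j}=1$ for $1\le j\le n$, the recurrence gives $F_{n,n+j}=F_{n,n+j-1}+1$ for $1\le j\le n$, so $F_{n,n+j}=j+1$ on this range, and in particular $F_{n,2n}=n+1=H_{n,0}+0+(n+1)$. For the inductive step, I would write $F_{n,(m+1)+2n}=F_{n,m+2n}+F_{n,(m+1)+n}$ from the recurrence, substitute the hypothesis $F_{n,m+2n}=H_{n,m}+m+(n+1)$, and compare against the target $H_{n,m+1}+(m+1)+(n+1)=H_{n,m}+K_{n,m+1}+(m+1)+(n+1)$. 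After cancelling $H_{n,m}$ and the constant, the step reduces to the single identity $F_{n,(m+1)+n}=K_{n,m+1}+1$, which is exactly part (1) applied with $k=m$, since that gives $K_{n,m+1}=\sum_{i=0}^{m+1}F_{n,i}=F_{n,(m+1)+n}-1$.

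I do not expect a serious obstacle, as the computations are elementary. The one point demanding care is the bookkeeping in the inductive step of part (2): one must correctly invoke the double partial-sum relation $H_{n,m+1}=H_{n,m}+K_{n,m+1}$ and then recognize that the surviving terms collapse precisely onto part (1), with the index shift $k=m$ rather than some other shift. The only other place an error could creep in is confirming that each index at which the recurrence $F_{n,m}=F_{n,m-1}+F_{n,m-n}$ is invoked is genuinely at least $n+1$, which I would check holds at every use above.
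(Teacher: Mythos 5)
Your proposal is correct and takes essentially the same route as the paper: part (1) by repeated application of the recurrence (you induct upward where the paper telescopes downward --- the same computation), and part (2) by induction on $m$, using the relation $H_{n,m+1} = H_{n,m} + K_{n,m+1}$ and then invoking part (1) with $k = m$, exactly as the paper does. Your explicit verification that $F_{n,2n} = n+1$ via $F_{n,n+j} = j+1$ for $1 \le j \le n$ fills in a base-case detail the paper merely asserts as ``true.''
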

\begin{proof}
We prove item (1). Fix $n\ge 2$ and $k\ge 0$. We have
\begin{align*}
    F_{n,k +1+n} - \sum_{i=0}^{k+1} F_{n,i} - 1 &\ =\ (F_{n,k+1+n} - F_{n,k+1}) - \sum_{i=0}^k F_{n,i} - 1\\
    &\ =\ F_{n, k + n} - \sum_{i=0}^k F_{n,i} - 1\\
    &\ =\ (F_{n, k+n} - F_{n,k}) - \sum_{i=0}^{k-1} F_{n,i} - 1\\
    &\ =\ F_{n, k+n-1} - \sum_{i=0}^{k-1} F_{n,i} - 1\\
    &\ =\ \ldots \ =\ F_{n, n-1} - 1 \ =\ 0.
\end{align*}
Hence, we have $F_{n,k+1+n} - \sum_{i=0}^{k+1}F_{n,i} - 1 = 0$, so $\sum_{i=0}^{k+1} F_{n,i} = F_{n,k+1+n} - 1 $. 

Next, we prove item (2). Fix $n\ge 2$. We prove by induction. \textit{Base case:} for $m = 0$, the identity is equivalent to $F_{n,2n} = n+1$, which is true. \textit{Inductive hypothesis:} suppose that the identity is true for all $0\le m\le k$ for some $k\ge 0$. We want to show that it is true for $m = k+1$. We have
\begin{align*}
    F_{n, k+1+2n} &\ =\ F_{n, k+2n} + F_{n, k+1+n}\\
    &\ =\ (H_{n,k}+k+(n+1)) + F_{n, k+1+n}&\mbox{ by the inductive hypothesis}\\
    &\ =\ (H_{n,k}+F_{n,k+1+n}-1) + (k+1) + (n+1)\\
    &\ =\ \left(H_{n,k} + \sum_{i=0}^{k+1} F_{n,i}\right) + (k+1) + (n+1)&\mbox{ by item (1)}\\
    &\ =\ H_{n,k+1} + (k+1) + (n+1).
\end{align*}
This completes our proof. 
\end{proof}

Theorem \ref{m2} shows that $(H_{2,m})$ is related to the number of certain subsets of $\{1, 2, \ldots, n\}$; however, the author is unable to find such a combinatorial perspective of the sequence $(H_{n,m})$ when $m>2$. Is there a connection between the sequence $(H_{n,m})$ and the number of subsets of $\{1,2, \ldots, n\}$ restricted to certain conditions as in Theorem \ref{m2}?

Fix $k\ge 2$. Another way to generalize Theorem \ref{m2} is to look at the sequence formed by counting subsets of $\{1, 2, \ldots, n\}$ satisfying two conditions: (i) have at least $k$ elements, and (ii) have their difference sets only contain odd numbers. When $k = 2$, Theorem \ref{m2} connects the sequence obtained by counting subsets to the Fibonacci sequence; however, the author is unable to find such a connection for bigger values of $k$. For example, when $k = 3$, the sequence we obtain is $0,0,1,3,8,17,34,63,113,196,334,560,\ldots$. Is there a neat relation among terms in this sequence?

\begin{rek}\normalfont
The sequence $0,0,1,3,8,17,34,63,113,196,334,560,\ldots$ was recently added to OEIS by N.J. A. Sloane, and its recurrence relation was discovered by A. Heinz (see \seqnum{A344004}).
\end{rek}
%%%%%%%%%%%%%%%%%%%%%%%%%%%%%%%%%%%%%%%%%%%%%%%%%%%%%%%%%%%%%%%%%%%%%%%%%%%%%%%%%%%%%%%%%%%%%%%%%%%%%%%%%%%%%%%%%%%%%%%%%%%%%%%%%%%%%%%
%%%%%%%%%%%%%%%%%%%%%%%%%%%%%%%%%%%%%%%%%%%%%%%%%%%%%%%%%%%%%%%%%%%%%%%%%%%%%%%%%%%%%%%%%%%%%%%%%%%%%%%%%%%%%%%%%%%%%%%%%%%%%%%%%%%%%%%
%%%%%%%%%%%%%%%%%%%%%%%%%%%%%%%%%%%%%%%%%%%%%%%%%%%%%%%%%%%%%%%%%%%%%%%%%%%%%%%%%%%%%%%%%%%%%%%%%%%%%%%%%%%%%%%%%%%%%%%%%%%%%%%%%%%%%%%

%%%%%%%%%%%%%%%%%%%%%%%%%%%%%%%%%%%%%%%%%%%%%%%%%%%%%%%%%%%%%%%%%%%%%%%%%%%%%%%%%%%%%%%%%%%%%%%%%%%%%%%%%%%%%%%%%%%%%%%%%%%%%%%%%%%%%%%
%%%%%%%%%%%%%%%%%%%%%%%%%%%%%%%%%%%%%%%%%%%%%%%%%%%%%%%%%%%%%%%%%%%%%%%%%%%%%%%%%%%%%%%%%%%%%%%%%%%%%%%%%%%%%%%%%%%%%%%%%%%%%%%%%%%%%%%

\newcommand{\etalchar}[1]{$^{#1}$}

\ \\

\noindent MSC2010: 11B39

\end{document}